\numberwithin{equation}{section}
\providecommand{\keywords}[1]
{
  \small	
  \textbf{\textit{Keywords---}} #1
}
\theoremstyle{definition}
\newtheorem{lemma}{Lemma}[section]
\newtheorem{definition}{Definition}[section]
\newtheorem{remark}{Remark}[section]
\newcommand{\beq}{\begin{equation}}
\newcommand{\ee}{\end{equation}}
\newcommand{\ben}{\begin{equation*}}
\newcommand{\een}{\end{equation*}}
\newcommand{\normord}[1]{:\mathrel{\mkern2mu #1 \mkern2mu}:}
\newcommand\myeq{\mathrel{\stackrel{\makebox[0pt]{\mbox{\normalfont\tiny def}}}{=}}}
\DeclareMathOperator*{\Tr}{Tr}
\DeclareMathOperator*{\Cl}{Cl}
\DeclareMathOperator*{\End}{End}
\newcolumntype{M}[1]{>{\centering\arraybackslash}m{#1}}
\newcommand{\qbinom}[2]{\genfrac{[}{]}{0pt}{}{#1}{#2}}
\title{An alternative proof of $\widehat{\mathfrak{sl}}_2'$ standard module semi-infinite structure}
\author[]{Timur Kenzhaev \thanks{kenzhaev\_t\_d@mail.ru}}
\affil[]{Skolkovo Institute of Science and Technology, Moscow, Russia}
\date{}
\begin{document}

\pagenumbering{arabic}

\maketitle

\begin{abstract}
B. Feigin and A. Stoyanovsky found the basis of semi-infinite monomials in standard $\widehat{\mathfrak{sl}}_2'$-module $L_{(0, 1)}$ with Lefschetz formula for the corresponding flag variety. These semi-infinite monomials are constructed by modes of the current $e(z) = \sum\limits_{n\in\mathbb{Z}} e_n\,z^{- n - 1}$. We give an alternative proof of this fact using explicit ``fermionic" construction of this module. Namely, we realize $L_{(0, 1)}$ inside of the zero-charge subspace of Fermionic Fock space and show linear independence of vectors corresponding to semi-infinite monomials.      
\end{abstract}

\keywords{combinatorial bases, Feigin-Stoyanovsky bases, basic subspaces, Fermionic Fock space.}

\section{Introduction}

Lie algebra $\widehat{\mathfrak{sl}_2}'$ is
\ben
\widehat{\mathfrak{sl}_2}' = \mathfrak{sl}_2\otimes\mathbb{C}[t, t^{-1}]\oplus\mathbb{C}\,K
\een
with bracket
\ben
\begin{aligned}
&[e_n, e_m] = [f_n, f_m] = 0, &[e_n, f_m] &= h_{n + m} + n\,\delta_{n, -m}\,K,
\\
&[h_n, e_m] = 2\,e_{n + m}, &[h_n, f_m] &= -2\,f_{n + m}, 
\\
&[h_n, h_m] = 2n\,\delta_{n, -m}\,K, &[K, \cdot\,\,] &= 0.
\end{aligned}
\een
Triangular decomposition is $\widehat{\mathfrak{sl}_2}' = \hat{\mathfrak{n}}_+\oplus \hat{\mathfrak{h}}\oplus \hat{\mathfrak{n}}_-$, where
\ben
\hat{\mathfrak{n}}_+ = \langle e_0\rangle + \sum\limits_{k > 0}\,t^k\,\mathfrak{sl}_2, \quad \hat{\mathfrak{h}} = \langle h_0, K \rangle, \quad \hat{\mathfrak{n}}_- = \langle f_0\rangle + \sum\limits_{k > 0}\,t^{-k}\,\mathfrak{sl}_2\:.
\een
\begin{definition}
$\widehat{\mathfrak{sl}_2}'$-module of the highest weight $(l, k)$ is the irreducible module $L_{(l, k)}$ with cyclic vector $v\in L_{(l, k)}$ s.t. 
\ben
\hat{\mathfrak{n}}_+\, v = 0, \quad h_0\,v = l v, \quad K\,v = k\,v.
\een
\end{definition}
Irreducible $\widehat{\mathfrak{sl}_2}'$ module of the highest weight $(0, 1)$~---~$L_{(0, 1)}$ is called \emph{standard}. It could be realized as direct sum of bosonic Fock modules:
\ben
L_{(0, 1)} = \bigoplus\limits_{m\in\mathbb{Z}} F_{m\sqrt{2}}.
\een
In terms of the currents
\ben
e(z) = \sum\limits_{n\in\mathbb{Z}}\,e_n\,z^{- n - 1},\quad f(z) = \sum\limits_{n\in\mathbb{Z}}\,f_n\,z^{- n - 1},\quad h(z) = \sum\limits_{n\in\mathbb{Z}}\,h_n\,z^{- n - 1},
\een
action is given by 
\ben
e(z)= \;\normord{\exp\left(\sqrt{2}\,\varphi(z)\right)}\,, \quad f(z)= \;\normord{\exp\left(-\sqrt{2}\,\varphi(z)\right)}\,, \quad h(z)= \;\sqrt{2}\,\partial\varphi(z),  
\een
where $\varphi(z)$ is holomorphic bosonic field:
\ben
\varphi(z) = q + a_0\,\log z + \sum\limits_{n\neq 0} \frac{a_n}{-n}\,z^{-n},
\een
with $[a_0, q] = 1$.
Character of $L_{(0, 1)}$ is
\beq
\label{CharacterL01}
\ch L_{(0, 1)} = \Tr \left(z^{\frac{h_0}{2}}\, q^{L_{0}}\right) = \sum\limits_{n\in\mathbb{Z}}\, \frac{z^n\, q^{n^2}}{(q)_{\infty}},
\ee
where $L_{0}$ is zero mode of stress-energy tensor\
\ben
L(z) = \sum\limits_{n\in\mathbb{Z}}\,L_n\,z^{- n - 2} = \frac{\normord{a(z)^2}}{2}.
\een
\begin{definition}
\emph{Basic subspace} $W_{0} \subset L_{(0, 1)}$ is $W_0 = \mathbb{C}[e_{-1}, e_{-2}, e_{-3}, \ldots]|0\rangle$.
\end{definition}
Feigin and Stoyanovsky proved in \cite{FS} that character of this subspace equals
\beq
\label{BasicSubspaceCharacter}
\ch W_0 = \sum\limits_{n = 0}^{\infty}\,\frac{z^n\, q^{n^2}}{(q)_n}.
\ee
Acting by translation subgroup $T(x) = x + 1, \: x\in\mathbb{R}$ of  Weyl group we get sequence of embedded subspaces 
\ben
W_m = T^m\,W_0,
\een
$W_{m + 1} \subset W_{m}$ with character 
\beq
\label{BasicSubspaceCharacter_j}
\ch W_m = T^{m}\,\ch W_0 = \Tr\left(z^{\frac{h_0}{2} + m}\,q^{L_0 + n\,h_0 + n^2}\right)|_{W_0} = \sum\limits_{n = 0}^{\infty}\, \frac{z^{m + n}\,q^{m^2 + 2mn + n^2}}{(q)_{n}} = \sum\limits_{n = m}^{+\infty}\frac{z^{n}\, q^{n^2}}{(q)_{n - m}},
\ee
setting $m\to -\infty$ we get
\beq
\label{CharacterOfSum}
\ch \left(\bigoplus\limits_{m\in\mathbb{Z}}\, W_m\right) = \lim\limits_{m\to -\infty}\,\sum\limits_{n = m}^{\infty}\, \frac{z^{n}\, q^{n^2}}{(q)_{n - m}} = \sum\limits_{n\in\mathbb{Z}}\, \frac{z^n\, q^{n^2}}{(q)_{\infty}}. 
\ee
Thus, comparing with formula \eqref{CharacterL01}, we get
\ben
W = \bigoplus\limits_{j \in \mathbb{Z}}\,W_j = L_{(0, 1)}.
\een

\begin{figure}[h!]
\begin{center}
\includegraphics[scale = 0.5]{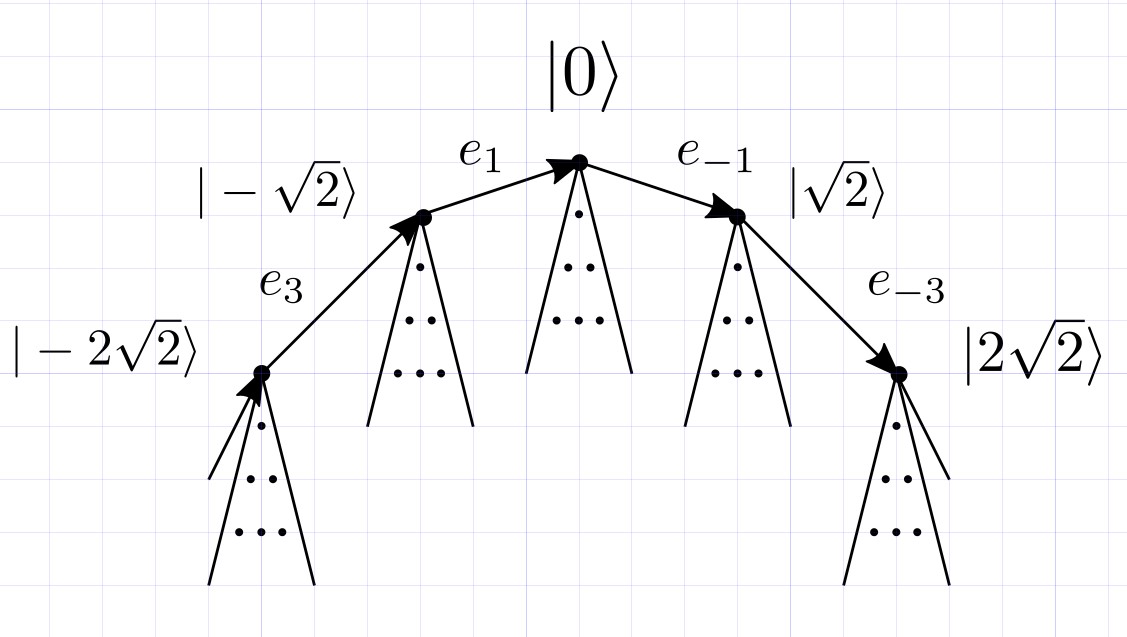}
\caption{Weight diagram for $L_{(0, 1)}$
\\
$\ch L_{(0, 1)} = \sum\limits_{m\in\mathbb{Z}}\,\frac{z^m\,q^{m^2}}{(q)_{\infty}}$
}
\end{center}
\end{figure}

\begin{definition}
\label{Fibb_monomials}
Consider polynomial ring $\mathbb{C}[x_i \:|\: i\in\mathbb{Z}]$. Monomial $x_{j_1}\,x_{j_2}\ldots x_{j_k} $ is called \emph{Fibonacci-1 monomial} if  $j_m - j_{m - 1} > 1$ for any $m\in\{2, 3 \ldots, k\}$. 
Polynomial is called Fibonacci-1 if it is a linear combination of Fibonacci monomials. Linear space of Fibonacci-1 polynomials is denoted by $\mathbb{C}^{F}_{1}[x_i]$. There is natural bigradation on this space
\beq
\label{bigradation}
\begin{aligned}
& \deg_z \left(x_{j_1}\,x_{j_2}\ldots x_{j_k}\right) = k,
\\
& \deg_q \left(x_{j_1}\,x_{j_2}\ldots x_{j_k}\right) = -\,j_1 - \,j_2 - \ldots - \,j_k.
\end{aligned}
\ee
\end{definition}
With relation $e^2(z) = 0$ one might prove the following
\begin{lemma}
\ben
W_{0} \myeq \mathbb{C}[e_i \:|\: i\in\mathbb{Z}]\,|0\rangle = \mathbb{C}[e_i \:|\: i \leq - 1]\,|0\rangle = \mathbb{C}^{F}_{1}[e_i \:|\: i \leq  - 1]\,|0\rangle.
\een
\end{lemma}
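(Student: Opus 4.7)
The plan is to verify the three equalities in order, with the middle one being the definition of $W_0$. For the first, since the modes $e_n$ pairwise commute the polynomial ring $\mathbb{C}[e_i \mid i \in \mathbb{Z}]$ acts on $L_{(0, 1)}$, and any monomial can be reordered to place non-negative modes on the right; since $e_n|0\rangle = 0$ for all $n \geq 0$ (as $e_n \in \hat{\mathfrak{n}}_+$), only monomials in $e_i$ with $i \leq -1$ survive.

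The substantive step is the third equality. The key ingredient is the relation $e(z)^2 = 0$, which follows from the bosonic realization via the OPE $e(z)e(w) = (z - w)^2\,\normord{\exp\!\left(\sqrt{2}\,\varphi(z) + \sqrt{2}\,\varphi(w)\right)}\,$. Extracting the coefficient of $z^{-N - 2}$ gives the operator identity $\sum_{p + q = N} e_p e_q = 0$, and applied to $|0\rangle$ (using $e_n|0\rangle = 0$ for $n \geq 0$ to discard terms with $p$ or $q$ non-negative) this yields
\ben
\sum_{\substack{p + q = N \\ p, q \leq -1}} e_p\,e_q\,|0\rangle = 0, \quad N \leq -2.
\een
I would then use these to reduce any non-Fibonacci-1 monomial $e_{j_1}\cdots e_{j_k}|0\rangle$ with $j_1 \leq \ldots \leq j_k \leq -1$: whenever $j_{s + 1} - j_s \in \{0, 1\}$, the relation with $N = j_s + j_{s + 1}$ replaces $e_{j_s}e_{j_{s + 1}}$ by a linear combination of products $e_p\,e_q$ with $p + q = j_s + j_{s + 1}$, $p < j_s$, $q > j_{s + 1}$, and $p, q \leq -1$ (with the monomial simply vanishing, as for $e_{-1}^2$, when no such pairs exist).

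The main obstacle is termination of this rewriting, since a newly inserted $q$ can form bad pairs with other factors of the monomial. I would control this via the invariant $\sum_i j_i^2$: with $p + q = j_s + j_{s + 1}$ fixed and $\{p, q\}$ strictly more spread than $\{j_s, j_{s + 1}\}$, one has $p^2 + q^2 > j_s^2 + j_{s + 1}^2$, so $\sum_i j_i^2$ strictly increases with each rewrite. The quantities $k = \deg_z$ and $\sum_i j_i = -\deg_q$ are preserved by the rewriting, and for fixed $k$ and $\sum_i j_i$ with all $j_i \leq -1$ the sum of squares is bounded above (the extreme profile being $(-1, \ldots, -1, \sum_i j_i + k - 1)$). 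Hence the procedure terminates and expresses every element of $W_0$ as a Fibonacci-1 polynomial in the $e_i$ applied to $|0\rangle$.
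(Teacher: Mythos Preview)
Your argument is correct and follows exactly the route the paper indicates: the paper's entire proof of this lemma is the sentence ``With relation $e^2(z)=0$ one might prove the following,'' and you have filled in the standard straightening argument together with a valid termination invariant $\sum_i j_i^2$. Note that the paper's detailed fermionic computations (Lemmas~4.1--4.2) establish the complementary linear-independence statement $\mathbb{C}^F_1[e_i\mid i\le -1]\hookrightarrow W_0$, not the spanning claim of this lemma.
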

and by the Weyl group action
\begin{lemma}
\label{lemmaBasicSubspaceSL}
\ben
W_{j} \myeq \mathbb{C}[e_i \:|\: i\in\mathbb{Z}]\,|j\sqrt{2}\rangle = \mathbb{C}[e_i \:|\: i \leq -2j - 1]\,|j\sqrt{2}\rangle = \mathbb{C}^{F}_{1}[e_i \:|\: i \leq -2j - 1]\,|j\sqrt{2}\rangle.
\een
\end{lemma}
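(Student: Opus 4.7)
The plan is to deduce this lemma from the previous one (the $j=0$ case) by applying the Weyl translation $T^j$, since by construction $W_j = T^j W_0$. What I need are the actions of $T$ on the highest weight vector and on the current modes $e_n$; once these are in hand the statement follows by transport.

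In the free bosonic realization, the translation $T$ is implemented by the lattice translation operator $e^{\sqrt{2}\,q}$, so it maps $|0\rangle$ to $|\sqrt{2}\rangle$. I would then show, using $[a_0, q] = 1$, that conjugation by $T$ shifts $\varphi(z)$ by $\sqrt{2}\log z$, hence
\ben
T\,e(z)\,T^{-1} = z^{2}\,e(z),
\een
which upon extracting coefficients of $z^{-n-1}$ yields $T\,e_n\,T^{-1} = e_{n-2}$. Iterating gives $T^j|0\rangle = |j\sqrt{2}\rangle$ and $T^j\,e_n\,T^{-j} = e_{n-2j}$.

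I would then apply $T^j$ to each of the three descriptions of $W_0$ from the preceding lemma. The polynomial ring $\mathbb{C}[e_i \:|\: i\in\mathbb{Z}]$ is setwise invariant under the uniform index shift; the subring $\mathbb{C}[e_i \:|\: i\leq -1]$ shifts to $\mathbb{C}[e_k \:|\: k\leq -2j-1]$; and the Fibonacci-1 subspace $\mathbb{C}^{F}_{1}[e_i \:|\: i\leq -1]$ maps to $\mathbb{C}^{F}_{1}[e_k \:|\: k\leq -2j-1]$, because the gap condition $j_m - j_{m-1} > 1$ is preserved under a common shift of all indices. Acting these rings on $T^j|0\rangle = |j\sqrt{2}\rangle$ gives the three claimed equalities.

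The only technical point requiring real care is the derivation of the conjugation formula $T\,e_n\,T^{-1} = e_{n-2}$ (and thus its iterate); once this is pinned down from the bosonic formulas, the remainder of the proof is a direct transcription of the previous lemma via the Weyl group action.
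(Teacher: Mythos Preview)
Your approach is exactly the paper's: the paper simply writes ``and by the Weyl group action'' to deduce this lemma from the $j=0$ case, and you have correctly spelled out what that means --- implement $T$ as $e^{\sqrt{2}\,q}$, compute its conjugation action on the modes $e_n$, and transport the three descriptions of $W_0$ to $|j\sqrt{2}\rangle$. One small slip to fix when you write it up: the intermediate formula should read $T\,e(z)\,T^{-1} = z^{-2}\,e(z)$, since conjugation by $e^{\sqrt{2}\,q}$ sends $a_0 \mapsto a_0 - \sqrt{2}$ and hence $\varphi(z)\mapsto\varphi(z)-\sqrt{2}\,\log z$; this is what actually yields your (correct) mode relation $T\,e_n\,T^{-1}=e_{n-2}$, whereas $z^{2}$ would give $e_{n+2}$ and the wrong index bound.
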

Then using character formulas \eqref{BasicSubspaceCharacter} and \eqref{BasicSubspaceCharacter_j} we get
\ben
W_{j} = \mathbb{C}^{F}_{1}[e_i \:|\: i \leq -2j - 1]\,|j\sqrt{2}\rangle \simeq \mathbb{C}^{F}_{1}[e_i \:|\: i \leq -2j - 1].  
\een
Isomorphism of the vector spaces $\mathbb{C}^{F}_{1}[e_i \:|\: i \leq -2j - 1]\simeq \mathbb{C}^{F}_{1}[e_i \:|\: i \leq -2j - 1]\,|j\sqrt{2}\rangle$, $\: g \to g|j\sqrt{2}\rangle$ follows from coincidence of the left and right parts characters. Character of $\mathbb{C}^{F}_{1}[e_i \:|\: i \leq - 1]$ with respect to bigradation \eqref{bigradation} is calculated in \hyperref[Appendix A]{Appendix A}. For general $j$ the statement is obtained by the shift of $e_i$'s indices.  
\\
Therefore, identifying
\beq
\label{IdentificationVacuum}
|j\sqrt{2}\rangle \rightarrow e_{-2j + 1}\,e_{-2j + 3}\,e_{-2j + 5}\ldots,
\ee
we obtain basis in $W = L_{(0 , 1)}$ of semi-infinite monomials
\ben
e_{i_1}\,e_{i_2}\,e_{i_3}\,\ldots,
\een
such that
\begin{enumerate}
    \item $i_1 < i_2 < i_3 < \cdots$ ;
    \item $i_{k + 1} - i_k \geq 2$;
    \item $i_{k} = 1 \mod 2 \quad \text{for } k \gg 1$;
    \item $i_{k + 1} - i_k = 2 \quad \text{for } k \gg 1$.
\end{enumerate}
We prove Lemma \ref{lemmaBasicSubspaceSL} using realization of standard module inside of the zero charge subspace $\mathfrak{F}^{(0)}$. Then character formula \eqref{BasicSubspaceCharacter} is proved in an alternative way. Repeating argument \eqref{CharacterOfSum}, we get desired semi-infinite construction of $L_{(0, 1)}$.

\section{Fermionic Fock space}

\begin{definition}
$V$ is defined as infinite dimensional complex vector space spanned by vectors ${\psi_k,\, k\in\mathbb{Z}}$. $\psi_m^*$ are elements of the bounded dual space defined as:
$$
\psi_m^*(\psi_n) = \delta_{m + n,\, 0}.
$$
\end{definition}
\begin{definition}
\emph{Fermionic Fock space} (semi-infinite wedge sum) $\mathfrak{F}$ is infinite dimensional complex vector space spanned by formal semi-infinite  antisymmetric elements $\psi_{k_0}\wedge\psi_{k_1}\wedge\psi_{k_2}\ldots$ (\emph{elementary vectors}) such that
\begin{enumerate}
\item $k_0 > k_1 > k_2 > \cdots\,$;
\item $k_i = -i + m$\hspace{2mm} for $i \gg 1$ and some $m\in\mathbb{Z}$.
\end{enumerate}
\end{definition}
Number $m$ in this definition is also called \emph{charge}, \emph{energy} of an element is number
\beq
\label{FockEnergy}
\frac{m(m + 1)}{2} + \sum\limits_{i \geq 0} \left(k_i + i - m\right).
\ee
Therefore, $F$ has natural grading
$$
\mathfrak{F} = \bigoplus_{m \in \mathbb{Z}}\mathfrak{F}^{\,m}, \hspace{1cm} \mathfrak{F}^{\,m} = \bigoplus_{j \in \frac{m(m + 1)}{2} + \mathbb{Z}_+} \mathfrak{F}^{\,m}_j.
$$
From formula \eqref{FockEnergy} it's clear that there is natural bijection between elementary vectors of fixed charge $m$ and energy $j$ and partitions of $j$. Consequently, character of $\mathfrak{F}$ is given by formula
\ben
\ch \mathfrak{F} = \sum\limits_{\substack{m\in\mathbb{Z} \\\\ j\in\frac{m(m + 1)}{2} + \mathbb{Z}_{+}}} \left(\dim \mathfrak{F}^{\,m}_j\right) z^m\,q^j  =\sum\limits_{m \in \mathbb{Z}} \frac{z^m\,q^{\frac{m(m + 1)}{2}}}{\varphi(q)}.
\een 
Fermionic Fock space could be interpreted as Dirac's ``electron sea". In more detail one can view \cite{DiFrancesco}, \cite{dirac1981principles}.
\\
Consider complex Clifford algebra $\Cl$ with generators $\psi_i, \psi^*_j$ and relations\footnote{For example, it can be realized as Clifford algebra associated to vector space $\mathbb{C}[t, t^{-1}]\oplus\mathbb{C}[t, t^{-1}]dt$ with inner product given by residue pairing, see \cite{FBS2001}.}
$$
\{\psi_i, \psi_j\} = \{\psi^*_i, \psi^*_j\} = 0,\quad \{\psi_i, \psi^*_j\} = \delta_{i + j,\, 0}.  
$$
Fermionic Fock space is natural vacuum representation of $\Cl$, generated by vector \\$\psi_0\wedge\psi_{-1}\wedge\psi_{-2}\wedge\ldots \equiv |0\rangle$, such that action of $\psi_i$ is given by wedge product, action of $\psi^*_i$ is given by superderivation:
\ben
\psi_i \to \psi_i\,\wedge\:, \hspace{5mm} \psi_i^* \to \frac{\partial}{\partial\psi_{-i}}.
\een

\section{Fermionic realization of $L_{(0, 1)}$}

Fermionic Fock space $\mathfrak{F}$ is a projective representation of the algebra of infinite matrices with finite numbers of nonzero diagonals
\ben
\Bar{a}_{\infty} = \left\{(a_{ij})\:|\: i, j\in\mathbb{Z},\: a_{ij} = 0\quad |i - j| \gg 0\right\},
\een
with natural action $E_{ij} \to\: \normord{\psi_i\psi^*_{-j}}$, where normal ordering is defined as
\ben
\normord{\psi_i\,\psi^*_{-j}} = 
\begin{cases}
-\psi^*_{-j}\,\psi_i, & \mbox{if } i\leq 0,
\\
\psi_i\,\psi^*_{-j}, & \mbox{otherwise}.
\end{cases}
\een
In other words, there is a level one representation $r\colon a_{\infty}\longrightarrow \End(\mathfrak{F})$, where
\ben
a_{\infty} = \Bar{a}_{\infty} \oplus \mathbb{C}c, \quad [a, b] = ab - ba + \alpha(a, b)\,c, \quad [c, \:\cdot\:] = 0,
\een
is central extension of $\Bar{a}_{\infty}$ with two-cocycle
\ben
\begin{cases}
\alpha(E_{ij}, E_{ji}) = -\alpha(E_{ji}, E_{ij}) = 1 & \mbox{if } i\leq 0,\: j\geq 1,
\\
\alpha(E_{ij}, E_{mn}) = 0 & \mbox{otherwise}.
\end{cases}
\een
$a_{\infty}$ contains Heisenberg algebra $\mathcal{A} = \langle \Lambda_j, c\rangle_{j\in\mathbb{Z}}$ with 
\ben
\Lambda_j = \sum\limits_{k \in \mathbb{Z}}\,E_{k,\, k + j}\,, \quad [\Lambda_i, \Lambda_j] = i\delta_{i + j,\, 0}\,c.
\een
Charge subspaces $\mathfrak{F}^{(m)}$ are irreducible representations over this Heisenberg algebra (and consequently~$a_{\infty}$).
\\
$\widehat{\mathfrak{gl}_n}'$ $\left(\text{particularly } \widehat{\mathfrak{sl}}_2'\right)$ is embedded into $a_{\infty}$ as Lie subalgebra by\footnote{this realisation is described in detail in \cite{KacRaina}.}
\beq
\label{EmbeddingOfsl2}
e_{ij}(k) \to \sum\limits_{s\in\mathbb{Z}} E_{ns + i,\, n(s + k) + j}\,, \quad K\to c
	\ee
and contains $\mathcal{A}$, meaning $\mathfrak{F}^{(m)}$ being irreducible over $\widehat{\mathfrak{gl}}_n'$.
\\
Thus, representation $\tilde{r}\colon\widehat{\mathfrak{sl}}_2'\to\End\left(\mathfrak{F}^{(0)}\right)$ is constructed. It's easy to see that $\widehat{\mathfrak{sl}}_2'$ contains $\Lambda_j$ for odd $j$:
\ben
\Lambda_{2k + 1} = \sum\limits_{j\in\mathbb{Z}} E_{j, j + 2k + 1} = \sum\limits_{l\in\mathbb{Z}} E_{2l, 2l + 2k + 1} + \sum\limits_{l\in\mathbb{Z}} E_{2l + 1, 2l + 1 + 2k + 1} = e_k + f_{k + 1},
\een
and action of $\widehat{\mathfrak{sl}}_2'$ commutes with action of $\Lambda_j$'s for even $j$ (from \eqref{EmbeddingOfsl2} such $\Lambda_j$'s coincide with $e_{11}\left(\frac{j}{2}\right) + e_{22}\left(\frac{j}{2}\right)$ in $a_{\infty}$). Through BF correspondence then\footnote{Realization of $\widehat{\mathfrak{sl}}_2$ through differential operators on $\mathbb{C}[x_1, x_2, x_3, \ldots]$ was firstly obtained in \cite{LepowskyWilson1978}.} 
\ben
\sigma(\mathfrak{F}^{(0)}) = \mathbb{C}[x_1, x_2, x_3, \ldots\,] = \mathbb{C}[x_1, x_3, x_5, \ldots\,]\otimes \mathbb{C}[x_2, x_4, x_6, \ldots\,],  
\een
where $\mathbb{C}[x_1, x_3, x_5, \ldots\,]$ is irreducible $\widehat{\mathfrak{sl}_2}$-module of highest weight $(0, 1)$ and $\mathbb{C}[x_2, x_4, x_6, \ldots\,]$ is the multiplicity space. Then $L_{(0, 1)}$ might be extracted as
\ben
L_{(0, 1)} = \left\{v\in\mathfrak{F}^{0}\:|\:\Lambda_{2k}\,v = 0 \text{ for any } k > 0 \right\}.
\een

\section{Structure of the basic subspace}

Standard basis in $\mathfrak{sl}_2$ is 
\ben
e = 
\begin{pmatrix}
0 & 1 \\
0 & 0
\end{pmatrix},
\quad
f = 
\begin{pmatrix}
0 & 0 \\
1 & 0
\end{pmatrix},
\quad
h = 
\begin{pmatrix}
1 & 0 \\
0 & -1
\end{pmatrix}.
\een
From \eqref{EmbeddingOfsl2} it follows that
\ben
\begin{aligned}
&e_k\equiv e_{12}(k) \to \sum\limits_{s\in\mathbb{Z}}\,E_{2s + 1,\, 2(s + k) + 2}\,, \quad f_k\equiv e_{21}(k) \to \sum\limits_{s\in\mathbb{Z}}\,E_{2s + 2,\, 2(s + k) + 1}\,,
\\
&h_k\equiv e_{11}(k) - e_{22}(k) \rightarrow \sum\limits_{s\in\mathbb{Z}}\,\left(E_{2s + 1,\, 2(s + k) + 1} - E_{2s + 2,\, 2(s + k) + 2}\right).
\end{aligned}
\een
\begin{remark}
Directly form action $E_{ij} \to :\psi_i\psi^*_{-j}:$ it's clear that operator $e_k$ consequently shifts even indices of $\psi$'s by $-2k - 1$, for example
\ben
e_{0}\,|0\rangle = e_{0}\,\psi_0\wedge\psi_{-1}\wedge\psi_{-2}\wedge\psi_{-3}\wedge\ldots = 0,  
\een
\ben
e_{-1}\,|0\rangle = e_{-1}\,\psi_0\wedge\psi_{-1}\wedge\psi_{-2}\wedge\psi_{-3}\wedge\ldots = \psi_1\wedge\psi_{-1}\wedge\psi_{-2}\wedge\psi_{-3}\wedge\ldots,  
\een
\ben
\begin{aligned}
&e_{-3}\,|0\rangle = e_{-3}\,\psi_0\wedge\psi_{-1}\wedge\psi_{-2}\wedge\psi_{-3}\wedge\psi_{-4}\wedge\psi_{-5}\wedge\ldots 
\\
& = \psi_5\wedge\psi_{-1}\wedge\psi_{-2}\wedge\psi_{-3}\wedge\psi_{-4}\wedge\psi_{-5}\wedge\ldots + \psi_0\wedge\psi_{-1}\wedge\psi_{3}\wedge\psi_{-3}\wedge\psi_{-4}\wedge\psi_{-5}\wedge\ldots
\\
& + 
\psi_0\wedge\psi_{-1}\wedge\psi_{-2}\wedge\psi_{-3}\wedge\psi_{1}\wedge\psi_{-5}\wedge\ldots
\\
& = \psi_5\wedge\psi_{-1}\wedge\psi_{-2}\wedge\psi_{-3}\wedge\psi_{-4}\wedge\psi_{-5}\wedge\ldots + \psi_{3}\wedge\psi_0\wedge\psi_{-1}\wedge\psi_{-3}\wedge\psi_{-4}\wedge\psi_{-5}\wedge\ldots
\\
& + 
\psi_{1}\wedge\psi_0\wedge\psi_{-1}\wedge\psi_{-2}\wedge\psi_{-3}\wedge\psi_{-5}\wedge\ldots\:.
\end{aligned}
\een
Then up to a constant
\ben
\begin{aligned}
&|n\sqrt{2}\rangle \sim \psi_{2n + 1}\wedge\psi_{2n - 1}\wedge\psi_{2n - 3}\wedge\ldots\wedge\psi_{- 2n - 1}\wedge\psi_{- 2n - 2}\wedge\psi_{- 2n - 3}\wedge\ldots \hspace{5mm} n\in\mathbb{N},
\\
&|-n\sqrt{2}\rangle \sim \psi_{2n}\wedge\psi_{2n - 2}\wedge\psi_{2n - 4}\wedge\ldots\wedge\psi_{- 2n}\wedge\psi_{- 2n - 2}\wedge\psi_{- 2n - 3}\wedge\psi_{- 2n - 4}\wedge\ldots \hspace{5mm} n\in\mathbb{N},
\\
&|0\rangle \sim \psi_0\wedge\psi_{-1}\wedge\psi_{-2}\wedge\ldots\:.
\end{aligned}
\een
\end{remark}
\begin{lemma}
Any nontrivial Fibonacci monomial $g\in \mathbb{C}^{F}_{1}[e_i \:|\: i \leq -1]$ acts nontrivially on $|0\rangle$.
\end{lemma}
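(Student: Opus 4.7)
The plan is to exhibit a specific elementary vector $w \in \mathfrak{F}^{(0)}$ that appears in the expansion of $g\,|0\rangle$ with coefficient $\pm 1$, whence $g\,|0\rangle \neq 0$. Write $g = e_{j_1} e_{j_2} \cdots e_{j_k}$ with $j_1 < j_2 < \cdots < j_k \leq -1$ and $j_{i+1} - j_i \geq 2$. By the Remark, each $e_{j}$ acts on an elementary vector by summing over the ways of shifting a single even index $2m$ of a $\psi$ to $2m - 2j - 1$. Applying $g$ to $|0\rangle$ therefore expands as a sum over injections $\sigma \colon \{1, \ldots, k\} \to \{0, -2, -4, \ldots\}$, where $e_{j_i}$ removes $\psi_{\sigma(i)}$ and inserts $\psi_{\sigma(i) - 2j_i - 1}$; different $\sigma$'s may produce the same or different elementary vectors.

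First I will single out the assignment $\sigma_0(i) := -2(k-i)$. Parametrize the Fibonacci condition as $j_i = -(2(k-i) + 1) - \mu_i$ with $\mu_1 \geq \mu_2 \geq \cdots \geq \mu_k \geq 0$ (this encodes $j_k \leq -1$ together with the gap condition). The inserted positions are
\ben
p_i := \sigma_0(i) - 2j_i - 1 = 2(k-i) + 1 + 2\mu_i,
\een
all positive odd integers, strictly decreasing because $p_i - p_{i+1} = 2 + 2(\mu_i - \mu_{i+1}) \geq 2$. They are therefore disjoint from the surviving original positions $\{-1, -3, \ldots, -(2k-1)\} \cup \{-2k, -(2k+1), \ldots\}$, all $\leq -1$. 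So $\sigma_0$ contributes a nonzero elementary vector $w$ whose position set is $\{p_1, \ldots, p_k\} \cup \{-1, -3, \ldots, -(2k-1)\} \cup \{-2k, -(2k+1), \ldots\}$.

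The heart of the argument is uniqueness: I claim $\sigma_0$ is the only $\sigma$ contributing to $w$. Any competing $\sigma$ must have the same removed set $\{0, -2, \ldots, -2(k-1)\}$ and the same inserted set $\{p_1, \ldots, p_k\}$, hence determines a permutation $\pi \in S_k$ via $\sigma(i) = 2j_i + 1 + p_{\pi(i)}$. A direct substitution yields
\ben
-\sigma(i)/2 = k - 2i + \pi(i) + (\mu_i - \mu_{\pi(i)}),
\een
and validity requires $i \mapsto -\sigma(i)/2$ to be a bijection onto $\{0, 1, \ldots, k-1\}$. For $\pi \neq \mathrm{id}$ take $i_0 := \min\{i : \pi(i) \neq i\}$, so $\pi(i_0) > i_0$ and $\mu_{i_0} \geq \mu_{\pi(i_0)}$ by the partition property; then $-\sigma(i_0)/2 \geq k - i_0 + 1$. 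Since $\pi$ fixes each $j < i_0$, the values $-\sigma(j)/2 = k - j$ already occupy $\{k - i_0 + 1, k - i_0 + 2, \ldots, k - 1\}$, so $-\sigma(i_0)/2$ either exceeds $k - 1$ (out of range for a valid $\sigma$) or duplicates one of these earlier values (breaking injectivity). Hence $\pi = \mathrm{id}$ and $\sigma = \sigma_0$, so $w$ enters $g\,|0\rangle$ with coefficient $\pm 1$.

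The main obstacle is precisely this uniqueness step; once it is in place, $g\,|0\rangle \neq 0$ is immediate. The remaining ingredients---verifying the Fibonacci-to-partition reparametrization, setting up the expansion of the action of $g$, and the sign bookkeeping in the semi-infinite wedge---are routine.
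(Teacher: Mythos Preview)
Your proof is correct and follows essentially the same approach as the paper: you single out the same distinguished elementary vector (the paper's $Q$, obtained by letting $e_{j_k}$ hit $\psi_0$, $e_{j_{k-1}}$ hit $\psi_{-2}$, and so on) and argue that it occurs with coefficient $\pm 1$. The paper simply asserts uniqueness in one sentence, whereas you supply a careful permutation argument via the partition reparametrization $\mu$; this is a welcome elaboration but not a genuinely different route.
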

\begin{proof}
Let $e_{-i_k}\,e_{-i_{k - 1}}\ldots\,e_{-i_1} \in \mathbb{C}^{F}_{1}[e_i \:|\: i \leq -1]$ with $i_1 < i_2 < \cdots < i_k$. 
\beq
\label{sum_of_el_v}
e_{-i_k}\,e_{-i_{k - 1}}\ldots\,e_{-i_1}|0\rangle
\ee
is the sum of elementary vectors, which is nonzero as soon as there is elementary vector
\beq
\label{Qel_vector}
\begin{aligned}
&Q(e_{-i_1}\,e_{-i_2}\ldots\,e_{-i_k}) \equiv
\\
&\psi_{2i_1 - 1}\wedge\psi_{- 1}\wedge\psi_{- 2 + 2i_2 - 1}\wedge\psi_{-3}\wedge\psi_{- 4 + 2i_3 - 1}\wedge
\ldots\wedge\psi_{-2k + 3}\wedge\psi_{-2(k - 1) + 2i_k - 1}\wedge\psi_{-2k + 1}\wedge\psi_{-2k}\ldots\:,
\end{aligned}
\ee
which doesn't contract with any other elementary vector. Indeed, the only way to get this vector in \eqref{sum_of_el_v} is to act by $e_{i_1}$ on $\psi_0$, $e_{i_2}$ on $\psi_{-2}$, \ldots, $e_{i_k}$ on $\psi_{-2k + 2}$.  
\end{proof}
\begin{remark}
    Elementary vector \eqref{Qel_vector} is nontrivial as soon as there is a Fibonacci condition ${i_j - i_{j - 1} > 1}$, consequently
    \ben
-1 < 2i_1 - 1 < 2i_2 - 2 - 1 < \cdots.
    \een
\end{remark}
\begin{lemma}
Any nontrivial Fibonacci-1 polynomial $g\in \mathbb{C}^{F}_{1}[e_i \:|\: i \leq -1]$ acts nontrivially on $|0\rangle$.
\end{lemma}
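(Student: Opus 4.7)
The plan is to extend the previous lemma from a single Fibonacci monomial to an arbitrary linear combination via a leading-term argument built on the distinguished vector $Q(m)$ from the previous proof. I will reduce to a fixed total degree and then pick a specific $m^*$ in the support of $g$ so that $Q(m^*)$ survives with nonzero coefficient in $g\,|0\rangle$.

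First, $g\,|0\rangle$ decomposes by total $z$-degree: each operator $e_{-i}$ acts nontrivially on the vacuum by taking one even-indexed position $\psi_{-2\ell}$ to one positive odd-indexed position $\psi_{-2\ell+2i-1}$, so $m\,|0\rangle$ is a sum of elementary vectors with exactly $\deg_z(m)$ positive positions. Since elementary vectors with different numbers of positive positions are linearly independent, it suffices to treat the case where $g = \sum_m c_m\,m$ is supported on Fibonacci monomials of a single fixed degree $k$.

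Fix $m^* = e_{-i_1^*}\cdots e_{-i_k^*}$ in $\mathrm{supp}(g)$ and write $\alpha_b = 2 i_b^* - (2b-1)$ for the positive positions of $Q(m^*)$. Any elementary vector in the expansion of $m\,|0\rangle$ equal to $Q(m^*)$ comes from a matching that shifts the even positions $0,-2,\ldots,-2(k-1)$ to the destinations $\alpha_1,\ldots,\alpha_k$; solving the shift equation $-2a_l + 2 j_l - 1 = \alpha_{b(l)}$ gives $j_l = i_{b(l)}^* + a_l - b(l) + 1$. A reindexing then shows that the multi-set of indices of any contributing $m$ must have the form $\{\,i_b^* + \tau(b) - b : b = 1,\ldots,k\,\}$ for some permutation $\tau \in S_k$, and $\tau = \mathrm{id}$ recovers $m^*$.

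The key combinatorial input is the following lexicographic inequality: for any $\tau \in S_k$, the decreasing rearrangement $(\nu_1 \geq \cdots \geq \nu_k)$ of the multi-set $\{i_b^* + \tau(b) - b\}$ satisfies $(\nu_1,\ldots,\nu_k) \leq_{\mathrm{lex}} (i_k^*,\ldots,i_1^*)$, with equality iff $\tau = \mathrm{id}$. The Fibonacci-1 bound $i_b^* \leq i_k^* - 2(k-b)$ combined with $\tau(b) \leq k$ gives $i_b^* + \tau(b) - b \leq i_k^*$, strict unless $b = k$ and $\tau(k) = k$; hence $\nu_1 \leq i_k^*$ with equality only when $\tau(k) = k$, and one then iterates on the restriction of $\tau$ to $\{1,\ldots,k-1\}$. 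The main obstacle here is that the naive analogue for the \emph{increasing} rearrangement $(\mu_1,\ldots,\mu_k)$ --- that $\mu_j \leq i_j^*$ termwise --- is false in general (for instance $m^* = e_{-1}e_{-3}e_{-5}$ with $\tau = (23)$ gives $\mu_2 = 4 > 3 = i_2^*$), so the induction must proceed from the largest entry downward.

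Finally, choose $m^* \in \mathrm{supp}(g)$ so that its decreasing index sequence $(i_k^*,\ldots,i_1^*)$ is lex-minimum. Any other $m$ in $\mathrm{supp}(g)$ contributing to $Q(m^*)$ would correspond to some $\tau \neq \mathrm{id}$ and hence have strictly lex-smaller decreasing index sequence, contradicting minimality of $m^*$. For $m = m^*$ only $\tau = \mathrm{id}$ produces the multi-set $\{i_b^*\}$ (any other $\tau$ gives a strictly smaller multi-set by the inequality), so there is a unique matching with contribution $\pm 1$. Therefore the coefficient of $Q(m^*)$ in $g\,|0\rangle$ equals $\pm c_{m^*} \neq 0$, proving $g\,|0\rangle \neq 0$.
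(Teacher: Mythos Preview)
Your proof is correct and follows essentially the same leading-term strategy as the paper: both single out an extremal Fibonacci monomial $m^*$ in the support of $g$ and argue that the distinguished elementary vector $Q(m^*)$ from the previous lemma survives in $g\,|0\rangle$. The paper's version is terser---it takes the reflected-lexicographically maximal monomial and simply asserts non-cancellation---whereas you supply the explicit characterization of contributing monomials via the permutation $\tau$ together with the lexicographic inequality that justifies the choice of extremum, so in effect you have written out the details the paper leaves implicit (your lex-minimum on decreasing sequences plays the same role as the paper's reflected-lex maximum).
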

\begin{proof}
It's enough to prove this fact for homogeneous polynomial with $\deg_z g = n$, $\deg_q g =  m$. $g$ is finite linear combination of Fibonacci monomials. There is natural identification of Fibonacci monomial $e_{-i_n}\,e_{-i_{n - 1}}\ldots\,e_{-i_1} \in \mathbb{C}^{F}_{1}[e_i \;|\; i \leq -1]$ and partition $(i_n, i_{n - 1}, \ldots, i_{1})$. Reflected lexicographic order on these partitions gives total order on Fibonacci monomials $\tilde{g}$ with $\deg_z \tilde{g} = n$, $\deg_q \tilde{g} =  m$. Let $e_{-i_n}\,e_{-i_{n - 1}}\ldots\,e_{-i_1}$ be the maximal Fibonacci monomial among nontrivial monomials in $g$ in sense of this order. Then $Q(e_{-i_n}\,e_{-i_{n - 1}}\ldots\,e_{-i_1})$ is an elementary vector which appears in expansion of $g|0\rangle$ and doesn't contract with other elementary vectors.
\end{proof}
Then we know $W_{0}^{\sqrt{2}} \simeq \mathbb{C}^{F}_{1}[e_i \:|\: i \leq -1]$ and by the same argument
$$
W_{j}^{\sqrt{2}}~\simeq~\mathbb{C}^{F}_{1}[e_i \:|\: i \leq -2j - 1].
$$
Using expressions \eqref{CharacterOfSum} and \eqref{IdentificationVacuum} we get desired semi-infinite construction of $L_{(0, 1)}$.
\\
By the same procedure, realizing $L_{(1, 1)}$ inside of $\mathfrak{F}^{(1)}$ one can prove that $L_{(1, 1)}$ has the basis of semi-infinite monomials
\ben
e_{i_1}\,e_{i_2}\,e_{i_3}\,\ldots,
\een
such that
\begin{enumerate}
    \item $i_1 < i_2 < i_3 < \cdots$,
    \item $i_{k + 1} - i_k \geq 2$,
    \item $i_{k} = 0 \mod 2 \quad \text{for } k \gg 1$;
    \item $i_{k + 1} - i_k = 2 \quad \text{for } k \gg 1$.
\end{enumerate}

\clearpage
\bibliography{bibliography}{}

\clearpage
\begin{appendices}

 \renewcommand\thetable{\thesection\arabic{table}}
  \renewcommand\thefigure{\thesection\arabic{figure}}
 	
\section{Character formula for space of Fibonacci-1 polynomials}
\label{Appendix A}
Character of $\mathbb{C}^F_1[e_i \;|\; i < 0]$ with respect to bigradation might be derived from the character of $\mathbb{C}^F_1[e_{-1}, e_{-2}, \ldots, e_{-(N - 1)}]$ setting $N\to +\infty$. From Definition \ref{Fibb_monomials} it's clear that character of $\mathbb{C}^F_1[e_{-1}, e_{-2}, \ldots, e_{-(N - 1)}]$ is
\beq
\label{qBinom2}
\sum\limits_{n = 0}^{\infty}\,\sum\limits_{m = 0}^{\infty}\,p(n \,|\, m \text{ distinct parts, each } \leq N - 1, \text{ adjacent parts differs } \geq 2)\,z^m\,q^n.
\ee
Coefficient on $z^m$ term in this sum equals
\ben
q^{1 + 3 + \ldots + (2m - 1)}\,\qbinom{N - m}{m} = q^{m^2}\, \qbinom{N - m}{m},
\een
what is clear from ``cutting" of the Young diagram, illustrated by Figure \ref{Dissection_Diagram_2}.
Thus,
\ben
\ch \mathbb{C}^F_1[e_{-1}, e_{-2}, \ldots, e_{-(N - 1)}] = \sum\limits_{m = 0}^{[\frac{N}{2}]}\,z^m\,q^{m^2}\,\qbinom{N - m}{m}_q.
\een
Setting $N\to +\infty$, we obtain
\ben
\ch \mathbb{C}^F_1[e_i \:|\: i < 0] = \sum\limits_{m = 0}^{\infty}\, \frac{z^m\,q^m}{(q)_{m}},
\een
which coincides with \eqref{BasicSubspaceCharacter}.

\begin{figure}[h!]
\begin{center}
\includegraphics[scale = 0.7]{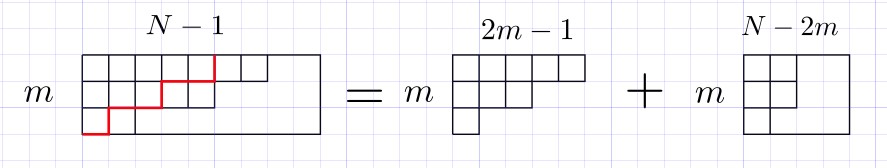}
\caption{Illustration of \eqref{qBinom2}: any Young diagram corresponding to partition into ${m \: \text{distinct}\: \leq N - 1}$ parts with adjacent differing $\geq 2$ might be obtained by ascribing Young diagram contained in ${m\times (N - 2m)}$ rectangle to $(1, 3, \ldots, 2m - 1)$ shape  from the right.}
\label{Dissection_Diagram_2}
\end{center}
\end{figure}

\end{appendices}

\end{document}